\newtheorem{theorem}{Theorem}[section]
\newtheorem{lemma}[theorem]{Lemma}
\newtheorem{assertion}[theorem]{Assertion}
\newtheorem{observation}[theorem]{Observation}
\newtheorem{corollary}[theorem]{Corollary}
\newtheorem{conjecture}[theorem]{Conjecture}
\theoremstyle{remark}
\newtheorem{remark}[theorem]{Remark}
\newtheorem{definition}[theorem]{Definition}
\newtheorem{example}[theorem]{Example}
\newcommand{\orkef}{OR(K \cup \{e\}, F)}
\newcommand{\cc}{\mathcal C}
\newcommand{\cm}{\mathcal M}
\newcommand{\cp}{\mathcal P}
\begin{document}

\makeatletter

\makeatother
\author{Ron Aharoni}
\address{Department of Mathematics\\ Technion}
\email[Ron Aharoni]{raharoni@gmail.com}
\thanks{\noindent The research of the first author was
supported by BSF grant no. $2006099$,  by the Technion's research promotion fund, and by
the Discont Bank chair.}
\author{Eli Berger}
\address{Department of Mathematics\\ University of Haifa}
\email[Eli Berger]{eberger@haifa}
\thanks{\noindent The research of the second author was
supported by BSF grant no. $2006099$ and by ISF grant no.}

\author{Maria Chudnovsky}\thanks{The research of the third  author was
supported by  BSF grant no.
2006099, and NSF grants DMS-1001091 and IIS-1117631.}
\address{Department of Mathematics, Princeton University
}
\email[Maria Chudnovsky]{ mchudnov@math.princeton.edu}

\author{David Howard}
\address{Department of Mathematics\\ Colgate University}
\email[David Howard]{dmhoward@colgate.edu}
\thanks{\noindent The research of the fourth author was
supported by BSF grant no. $2006099$, and by ISF grants Nos.
$779/08$, $859/08$ and $938/06$.
 }

\author{Paul Seymour}
\address{Department of Mathematics\\ Princeton University}
\email[Paul Seymour]{pds@math.princeton.edu}
\thanks{\noindent The research of the fifth author was
supported by
 }

\title{Large rainbow matchings in general graphs}

\begin{abstract}
By a theorem of Drisko, any $2n-1$ matchings of size $n$ in a bipartite graph have a   rainbow matching of size $n$.  Inspired by remarks of Bar\'at,   Gy\'arf\'as and S\'ark\"ozy, we conjecture that if $n$ is odd then the same is true
 also in general graphs, and that if $n$ is even then $2n$ matchings of size $n$ suffice. We prove that any $3n-2$ matchings of size $n$  have a   rainbow matching of size $n$.
 \end{abstract}

\maketitle
\section{Introduction}

Given a system $\cc=(C_1, \ldots ,C_m)$
 of sets of edges in a graph, a {\em   rainbow matching} for $\cc$ is a matching, each of whose edges is chosen from a different $C_i$. Note that we do not insist that the rainbow matching represents all $C_i$s, so in fact our rainbow matchings are usually only partial.
 In this paper we   consider the case in which the sets $C_i$ are themselves matchings, and we are interested in questions of the form ``how many matchings of  size $k$ are needed to guarantee the existence of a   rainbow matching of size  $m$.'' It is conjectured \cite{ab} that $n$ matchings of size $n+1$ in a bipartite graph have a rainbow matching of size $n$, and the authors are not aware of any  example refuting the possibility that $n$ matchings of size $n+2$ in a general graph have a rainbow matching of size $n$. In  the bipartite case the best current results are  that  $n$ matchings of size $\lceil \frac{3}{2}n\rceil$ have a   rainbow matching of size $n$ \cite{akz}, and that
 $n$ matchings of size  $n+o(n)$ have a   rainbow matching of size $n$ \cite{pok}.

 A surprising  jump occurs when
 we insist that the matchings are of size $n$:  we need to take $2n-1$ such matchings  in a bipartite graph to guarantee a   rainbow matching of size $n$. The following example shows that $2n-1$ is best possible:

 \begin{example} \label{example} Let
 $M_i, 1 \le i \le n-1$ to be all equal to one of the two
perfect matchings in the cycle $C_{2n}$ and $M_i, ~~i \le 2n-2$ to be all equal
to the other perfect matching. This is a system of $2n-2$ matchings of size $n$ that does not have a   rainbow matching of size $n$.
\end{example}

  The fact that $2n-1$ matchings suffice is essentially due to Drisko  \cite{drisko}, who proved the following special case:
\begin{theorem}\label{originaldrisko}
Let $A$ be an $m\times n$ matrix in which the entries of each row are all distinct.
If $m\ge 2n-1$, then $A$ has a transversal, namely  a set of n distinct entries with no two in the same row or column.
\end{theorem}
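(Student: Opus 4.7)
The plan is to recast Drisko's theorem in matching-theoretic language and then proceed by a maximum-rainbow-matching argument. For the translation, I would interpret row $r$ of the matrix $A$ as a matching $M_r$ in the bipartite graph with vertex classes $U$ (the entry values) and $V=\{1,\ldots,n\}$ (the columns), where $M_r=\{(A_{r,j},j):1\le j\le n\}$. The hypothesis that each row has distinct entries ensures $M_r$ is indeed a matching of size $n$, and a transversal of $A$ is exactly a rainbow matching of size $n$ across $M_1,\ldots,M_m$. Thus the claim reduces to the statement that $2n-1$ matchings of size $n$ in a bipartite graph always admit a rainbow matching of size $n$.

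For the proof proper, I would suppose toward contradiction that $M_1,\ldots,M_{2n-1}$ admit no rainbow matching of size $n$, and pick a rainbow matching $R$ of maximum size $k<n$, say $R=\{e_{i_1},\ldots,e_{i_k}\}$ with $e_{i_s}\in M_{i_s}$. The first key observation is that, by maximality of $R$, every edge of every unused matching $M_j$ (with $j\notin\{i_1,\ldots,i_k\}$) must meet $V(R)$: else a free edge would extend $R$ into a rainbow matching of size $k+1$. Counting incidences gives $n\le 2k$, and the $\ge n$ unused matchings each contribute $n$ edges hitting $V(R)$.

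The heart of the argument is an auxiliary (multi)digraph $D$ on the $k$ edges of $R$. Every edge $f$ of an unused matching $M_j$ that lies entirely inside $V(R)$ --- say $f$ joins the $U$-endpoint of $e_{i_a}$ to the $V$-endpoint of $e_{i_b}$ --- contributes an arc $a\to b$ colored $j$; every edge of $M_j$ with exactly one endpoint free contributes a pendant half-arc at the corresponding $R$-edge. The goal is to locate either a directed cycle in $D$ whose arc-colors are pairwise distinct and disjoint from $\{i_1,\ldots,i_k\}$, or a directed path between two pendant half-arcs with the same color condition. Swapping the $R$-edges along a directed cycle with the chosen arc-edges produces a rainbow matching of the same size $k$ but using a strictly different color-set, \emph{releasing} some $i_s$; a path between two pendants yields a rainbow matching of size $k+1$ directly. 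Either outcome can be iterated or combined to contradict the maximality of $R$.

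The main obstacle is executing the extraction of a properly colored cycle or path sharply enough to meet the bound $2n-1$. Example~\ref{example} shows the argument has essentially no slack: the pigeonhole must use both the lower bound of $n-k$ internal arcs per unused matching and the count of at least $n$ unused matchings, then avoid the $k$ forbidden colors when selecting the cycle. I expect to need a potential-function or parity refinement --- perhaps orienting according to the bipartition and exploiting that every closed walk in the bipartite setting has even length --- to rule out configurations where every candidate cycle reuses a color.
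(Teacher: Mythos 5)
Your reduction of the matrix statement to Theorem \ref{drisko} is correct, and your opening moves (a maximum rainbow matching $R$ of size $k<n$, the observation that every edge of each of the $2n-1-k\ge n$ unused matchings must meet $V(R)$, the auxiliary digraph on the edges of $R$) are the standard setup. But the heart of the theorem is exactly the step you defer to the last paragraph: extracting from the unused matchings a \emph{rainbow} augmenting structure, sharply enough to use only $2n-1$ matchings. As written this is a plan rather than a proof, and you say so yourself. For calibration: the paper does not prove Theorem \ref{originaldrisko} either -- it quotes it from \cite{drisko} and points to the short proof in \cite{ab} -- but the machinery it develops for the general-graph case (Lemma \ref{mainlemma} and Corollary \ref{maincor}) is precisely the missing step, and the final remark records that in the bipartite case the same scheme works with only $|F|+1$ paths, which is what gives the sharp constant $2n-1$.

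Two concrete problems with the mechanism you propose. First, a rainbow directed cycle yields a rainbow matching of the same size $k$ with a different colour set; this does not contradict the maximality of $R$ (which is maximality of \emph{size}), and ``iterating'' such swaps comes with no termination or progress measure, so the argument can cycle forever among colour sets. Second, you give no argument that a properly coloured cycle or pendant-to-pendant path exists at all; the ``parity or potential refinement'' you anticipate is not a refinement but the entire content of the theorem. The standard way to supply it is a monotone reachability argument: process the unused matchings one at a time, choosing from the augmenting path of each (which exists by Lemma \ref{aug}) a single edge $e_i$ and maintaining $K_i=K_{i-1}\cup\{e_i\}$ so that the set $OR(K_i,F)$ of vertices reachable from an $F$-unsaturated vertex by an odd $K_i$--$F$-alternating path grows strictly at every step. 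In a bipartite graph this reachable set can be confined to the $k$ vertices of one side of $V(R)$, so after at most $k+1$ steps it escapes $\bigcup F$ and produces a rainbow augmenting path; since $2n-1-k\ge k+1$ whenever $k\le n-1$, the count closes. (This is the bipartite sharpening of Corollary \ref{maincor} mentioned in the paper's concluding remark and proved in \cite{akz}; the general-graph version needs $2|F|+1$ paths, which is why the paper only gets $3n-2$ there.) Until you establish such a strictly increasing invariant, or some substitute for it, the proposal does not prove the theorem.
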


In \cite{ahj} it was observed that this theorem follows also from  B\'arany's colorful extension of Caratheodory's theorem, and the fact that in bipartite graphs $\nu=\nu^*$, meaning  that the notion of ``matching''  can be replaced by that of ``fractional matching''.

In \cite{ab} the theorem  was formulated in the rainbow matchings  setting, and
given a short proof.

\begin{theorem}\label{drisko}
Any family $\cm=(M_1, \ldots, M_{2n-1})$  of matchings of size $n$ in a bipartite graph possesses a rainbow matching.
\end{theorem}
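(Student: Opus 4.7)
I would prove the theorem by induction on $n$; the base case $n=1$ is immediate. For the inductive step, assume the result for $n-1$ and consider matchings $M_1,\ldots,M_{2n-1}$ of size $n$. Applying the hypothesis to the sub-family $M_3,\ldots,M_{2n-1}$, which consists of $2(n-1)-1$ matchings each of size $n\geq n-1$, yields a rainbow matching $R$ of size $n-1$ using edges $f_s\in M_{i_s}$ for $s=1,\ldots,n-1$, with $i_s\in\{3,\ldots,2n-1\}$. If any edge of $M_1$ or $M_2$ is vertex-disjoint from $V(R)$, that edge extends $R$ to the desired rainbow matching of size $n$, so we may assume every edge of $M_1\cup M_2$ meets the $(2n-2)$-element set $V(R)$.

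The next step is an exchange argument. Since $V(R)$ contains only $n-1$ vertices on each side of the bipartition and $M_1$ has $n$ edges, $M_1$ must contain an edge $e=uv$ with $u\in V(R)$ but $v\notin V(R)$. Let $f=uw$ be the edge of $R$ covering $u$, coming from $M_{i_s}$. Then
\[
R':=(R\setminus\{f\})\cup\{e\}
\]
is a rainbow matching of size $n-1$ that uses $M_1$ in place of $M_{i_s}$, with $V(R')=(V(R)\setminus\{w\})\cup\{v\}$. The unused set has changed from $\{M_1,M_2\}$ to $\{M_2,M_{i_s}\}$. If either of these two matchings contains an edge disjoint from $V(R')$, we extend and are done; otherwise we iterate, repeating the exchange on $R'$.

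The strategy for guaranteeing termination is to choose the initial $R$ to minimize a suitable potential function on rainbow matchings of size $n-1$, for instance the lexicographic order on the sorted tuple of used indices. Each forced exchange replaces some $i_s\geq 3$ by $1$ (or symmetrically by $2$), strictly decreasing the potential; since the potential cannot decrease forever, some iteration must produce a free edge, yielding the desired rainbow matching of size $n$. The main obstacle is precisely this termination argument: whenever the direct extension fails, one must be able to find a swap along $M_1$ or $M_2$ (or along one of the newly unused $M_{i_s}$) that strictly decreases the potential, which in turn requires showing that the $n$ edges of the unused matchings cannot all be trapped inside the current $(2n-2)$-vertex set. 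The sharpness of $2n-1$, witnessed by Example~\ref{example}, corresponds exactly to the borderline case: with only $2n-2$ matchings the exchange can close into a cycle in which the potential never strictly decreases and no free edge ever appears.
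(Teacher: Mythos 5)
Your opening moves are fine (induction, the observation that $n$ disjoint edges of an unused matching cannot all have both endpoints in a $(2n-2)$-vertex set, hence a single exchange is always available), but the proof stalls exactly where you say it does: the termination argument is not just ``the main obstacle,'' it is the entire content of the theorem, and the potential you propose does not work. Replacing a used index $t$ by an unused index $m$ makes the sorted index tuple lexicographically smaller only when $m<t$; nothing forces the available exchange to have this form (the edge of $M_1$ with one endpoint in $V(R)$ may well sit on the $R$-edge of smallest index), and once the used index set is $\{1,\dots,n-1\}$ no exchange whatsoever can decrease your potential, so minimality of $R$ yields no contradiction and the induction cannot close. A second, related problem is structural: a sequence of independent one-edge swaps, each producing a fresh rainbow matching of size $n-1$, forgets the history of the process; the count $2n-1$ never enters your argument except through the existence of two unused matchings, and two unused matchings are exactly what Example~\ref{example} shows to be insufficient to break out of a cycle.

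The correct mechanism, which is also how this paper's machinery works, is to chain one edge from \emph{each} unused matching (there are $n$ of them, not two) into a single $F$-alternating structure and take a symmetric difference at the end, rather than committing to a new rainbow matching after every edge. Concretely, Corollary~\ref{maincor} builds sets $K_0\subsetneq K_1\subsetneq\cdots$, adding one edge per unused matching so that the oddly reachable set $OR(K_i,F)$ strictly grows; after at most $|F|+1$ steps in the bipartite case (the sharper bound from \cite{akz} mentioned in the final Remark) an augmenting multicolored alternating path appears, and since a maximal rainbow matching $F$ of size $k<n$ leaves at least $2n-1-k\ge k+1$ unused matchings, Theorem~\ref{drisko} follows. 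Note that the paper itself does not reprove Theorem~\ref{drisko} --- it quotes it from \cite{drisko} and \cite{ab} --- so your attempt is necessarily a different route from anything written here; but as it stands it is a plausible-looking plan with the decisive step missing, and the specific potential you name is refutably wrong rather than merely unverified.
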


In \cite{akz} it was shown that Example \ref{example} is the only
instance in which $2n-2$ matchings do not suffice.
In \cite{akzdrisko} Theorem \ref{drisko} was strengthened, using topological methods:

\begin{theorem}
If $M_i,~~i=1,\ldots ,2n-1$ are matchings in a bipartite graphs satisfying $|M_i|=\min(i,n)$ for all $i \le 2n-1$, then there exists a rainbow matching of size $n$.
\end{theorem}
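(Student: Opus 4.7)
My plan is to imitate Drisko's extremal argument (cf.\ the short proof of Theorem~\ref{drisko} in \cite{ab}), carefully accounting for the fact that the first $n-1$ matchings may have fewer than $n$ edges. I would work by induction on $n$, the base $n=1$ being immediate since $|M_1|=1$ already supplies a rainbow matching of size $1$. For the inductive step, suppose $\cm=(M_1,\ldots,M_{2n-1})$ with $|M_i|=\min(i,n)$ admits no rainbow matching of size $n$. Choose a rainbow matching $R$ of maximum size $k<n$, using edges $e_{i_1}\in M_{i_1},\ldots,e_{i_k}\in M_{i_k}$ with $i_1<\cdots<i_k$, and among all such $R$ select one for which the tuple $(i_1,\ldots,i_k)$ is lexicographically minimal.

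The heart of the argument is then to fix an unused matching $M_j$ with $j\notin\{i_1,\ldots,i_k\}$ and use the observation that every edge of $M_j$ meets $V(R)$ (else $R$ would be enlargeable). Using the bipartite structure of $G$, I would build an alternating trail starting from an edge of $M_j$, alternating between edges of $R$ (leaving via the matched endpoint) and edges of other matchings $M_{i_t}$ with $i_t<j$ (entering via a vertex of $V(R)$). Bipartiteness guarantees that the parity of this trail is controlled and that no odd cycle intervenes. The trail must terminate either at a vertex unsaturated by $R$, which yields a rainbow matching of size $k+1$ contradicting maximality, or at a configuration that permits swapping an edge of some small-index $M_j$ in for one of the $e_{i_t}$'s, producing a rainbow matching of the same size but strictly smaller lexicographic index tuple, contradicting the extremal choice of $R$.

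The main difficulty is precisely that the early matchings are small, so some of the swap edges needed by a naive adaptation of Drisko's argument may simply not exist. To get around this I would choose the unused matching $M_j$ to be one of the later, full-sized matchings (i.e.\ $j\ge n$), so that $|M_j|=n$ supplies enough edges to launch and sustain the alternating trail; since there are $n$ full-sized matchings and $R$ uses at most $k\le n-1$ indices, at least one such $M_j$ is always available. Should the direct extremal approach fail to close, the fallback is the topological route hinted at in \cite{akzdrisko}: encode partial rainbow matchings as a simplicial complex, verify that its connectivity is at least $n-2$ by combining the K\"onig property of bipartite graphs with the size hypothesis $|M_i|=\min(i,n)$, and apply a Meshulam-type colorful nerve theorem to extract the desired rainbow matching of size $n$.
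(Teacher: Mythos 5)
You should first note that the paper does not prove this statement at all: it is quoted from \cite{akzdrisko}, where it is proved by topological methods, so your argument has to stand entirely on its own — and as written it does not. It is a plan rather than a proof: the decisive dichotomy, that your alternating trail must end either at an $R$-unsaturated vertex or at a configuration allowing a lexicographically improving swap, is precisely the content of a Drisko-type argument and is asserted, never established. Moreover, the only place your sketch uses the hypothesis $|M_i|=\min(i,n)$ is the observation that some full-size matching with index $j\ge n$ is unused, and that observation does not touch the real difficulty.

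Concretely, launching the trail from a full-size unused $M_j$ only supplies its first edge. Every subsequent non-$R$ edge of the trail, and every swap your lexicographic minimization calls for, must come from a further unused matching that happens to have an edge at a prescribed vertex of $V(R)$; unused matchings of index $i\le k$ have size at most $k$ and may simply have no edge there, and these small matchings are exactly the ones a lex-decreasing swap wants to bring in. A count shows the shortfall cannot be waved away: if the maximum rainbow matching has size $k<n$, the unused matchings of size greater than $k$ (the only ones that are even guaranteed to meet an augmenting structure of an $F$-alternating kind) number at least $2n-1-2k$, while the best known elementary machinery for bipartite multicolored augmenting trails (the remark following Corollary \ref{maincor}, citing \cite{akz}) needs more than $k$ such matchings; for $k$ close to $n-1$ one has $2n-1-2k$ far below $k+1$, so even that tool does not close your induction, and your sketch offers no substitute. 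Finally, the ``fallback'' paragraph merely names the topological route of \cite{akzdrisko} — a matching/independence complex, a connectivity bound, a Meshulam-type theorem — without defining the complex, proving the required connectivity estimate, or stating the theorem you would invoke, so it cannot be credited as a proof either. The gap is genuine: the inductive/extremal step on which everything rests is missing, and the proposed handling of the small matchings does not address where they actually obstruct the argument.
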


 Bar\'at,   Gy\'arf\'as and S\'ark\"ozy considered the same problem in general graphs, and from their comments  the following conjecture suggests itself:

\begin{conjecture}\label{bgs}\cite{bgs}
For $n$ even any $2n$ matchings of size $n$ in any graph have a   rainbow matching of size $n$, and for $n$ odd any $2n-1$ matchings  of size $n$ have a rainbow matching of size $n$.
\end{conjecture}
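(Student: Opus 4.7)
The plan is to argue by induction on $n$ and contradiction. Suppose $\cm=(M_1,\ldots,M_N)$ is a family of matchings of size $n$ with $N=2n-1$ for $n$ odd and $N=2n$ for $n$ even, admitting no rainbow matching of size $n$. Let $R$ be a maximum rainbow partial matching, say $|R|=r\le n-1$, and relabel so that $R$ uses one edge $e_i\in M_i$ for each $i\le r$. The goal is to exhibit an augmentation of $R$, contradicting maximality.

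For each unused index $i>r$, look at the symmetric difference $R\triangle M_i$: it decomposes into alternating paths and even alternating cycles. Maximality of $R$ rules out any component that is an $R$-augmenting path supported on $M_i$ alone; instead, augmentation must combine edges from two unused matchings $M_i,M_j$, possibly re-routing through one or more edges $e_k\in R$. Encoding these two-color swap possibilities as arcs of an auxiliary digraph $H$ on $\{1,\ldots,N\}$, absence of augmenting structures in $H$ should force each unused $M_i$ to be \emph{blocked} by at least one edge of $R$, and a double-count would then yield an inequality of the shape $N\le 2r+c$, where $c$ measures a parity defect coming from odd alternating cycles (the mechanism responsible for the tight example of three perfect matchings of $K_4$ when $n=2$).

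The parity split in the statement should emerge from this defect term: odd alternating cycles can arise only through some $e_k\in R$ whose endpoints both appear in $M_i\setminus R$, and such cycles each contribute one unit to $c$; for odd $n$ they cancel in pairs, while for even $n$ a single odd cycle can survive, accounting for the extra matching. The main obstacle, and where this plan will likely stall short of the full conjecture, is that the combinatorial counting scheme appears to lose a factor comparable to the number of odd cycles: in the bipartite case of Theorem~\ref{drisko} no odd alternating cycles exist, so the count closes cleanly at $2n-1$, but in general graphs each odd cycle absorbs roughly half a unit in the exchange count, pushing the achievable bound up toward $3n-2$, which is precisely what the paper proves. Closing the gap to $2n$ or $2n-1$ likely requires a topological substitute for the bipartite identity $\nu=\nu^*$ (exploited in \cite{ahj,akzdrisko}), and I regard this as the genuine difficulty of the conjecture.
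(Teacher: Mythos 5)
This statement is a \emph{conjecture} in the paper, attributed to Bar\'at, Gy\'arf\'as and S\'ark\"ozy; the paper does not prove it and, as far as the authors know, it remains open. What the paper actually establishes is the strictly weaker Theorem \ref{main}, that $3n-2$ matchings of size $n$ suffice. Your proposal, by your own admission, is not a proof either: it is an unexecuted plan whose central inequality $N\le 2r+c$ is never derived, whose auxiliary digraph $H$ and notion of an unused matching being ``blocked'' are never defined precisely, and whose key parity claim --- that odd alternating cycles ``cancel in pairs'' when $n$ is odd --- is asserted with no argument. That last claim is also aimed at the wrong target: the even/odd split in the conjecture does not come from a cancellation phenomenon in an exchange count, but from the existence or non-existence of the extremal configuration in Example \ref{basicexample} --- a matching of $n$ even-length chords of $C_{2n}$ exists if and only if $n$ is even, which is a parity fact about $\sum\{i+j \mid v_iv_j\in C\}=\binom{2n}{2}$. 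So there is nothing in the proposal that, if carried out, would be expected to produce the bounds $2n$ and $2n-1$.

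It is worth comparing your sketch to how the paper gets its weaker bound, since your intuition about where the argument stalls is roughly right but the mechanism is different. The paper does not double-count swaps in a digraph; it takes a maximum rainbow matching $F$ of size $k$, invokes the Gallai--Edmonds decomposition, and proves (Lemma \ref{mainlemma}) that every augmenting $F$-alternating path contains an \emph{enriching} edge, i.e.\ one that strictly enlarges the set $OR(K,F)$ of oddly reachable vertices. Since $|\bigcup F|=2k$, after at most $2k+1$ such enrichments one escapes $\bigcup F$ and obtains a multicolored augmenting path (Corollary \ref{maincor}); this needs $2k+1$ unrepresented matchings, hence $3k+1\le 3n-2$ matchings in total. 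The factor you describe as being ``lost to odd cycles'' is, in the paper's argument, the factor $2$ in $2|F|$, and the remark after Corollary \ref{maincor} shows this is sharp for that corollary: $2|F|$ augmenting paths genuinely do not suffice in general graphs, whereas $|F|+1$ do in the bipartite case. So any proof of the conjecture must bypass Corollary \ref{maincor} entirely, not merely tighten a count; the paper's own suggestion (via \cite{ahj}, \cite{akzdrisko}) is that fractional or topological methods are the plausible route. If you want to contribute a proof here, you would need to supply a genuinely new idea rather than a refinement of the alternating-path exchange argument.
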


In \cite{ahj} a fractional version of this conjecture was proved, in an even stronger version, in which the given matchings are replaced by fractional matchings:

\begin{theorem}
Let $F_1, \ldots ,F_{2n}$ be sets of edges in a general graph, satisfying $\nu^*(F_i)\ge n$. Then there exists a rainbow set $f_1 \in F_1, \ldots ,f_{2n} \in F_{2n}$ such that $\nu^*(\{f_1, \ldots ,f_{2n}\})\ge n$.
\end{theorem}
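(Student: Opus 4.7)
The plan is to apply B\'ar\'any's colorful Carath\'eodory theorem to a geometric encoding of edges. For each edge $e = uv$, let $v_e := \tfrac{1}{2}(\chi_u + \chi_v) \in \mathbb{R}^V$, the uniform distribution on the endpoints of $e$. A short LP manipulation shows that, for any edge set $F$, $\nu^*(F) \ge n$ is equivalent to
\[
\mathrm{conv}\{v_e : e \in F\} \cap K_n \ne \emptyset,
\]
where $K_n := \{p \in \mathbb{R}^V : p \ge 0,\ \sum_v p_v = 1,\ p_v \le 1/(2n)\ \forall v\}$; the desired conclusion $\nu^*(\{f_1,\ldots,f_{2n}\}) \ge n$ translates identically with $F$ replaced by $\{f_1,\ldots,f_{2n}\}$.

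The argument is cleanest when $|V|=2n$. In that case the constraints defining $K_n$ force $p = \mathbf{1}/(2n)$, so $\mathbf{1}/(2n) \in \mathrm{conv}\{v_e : e \in F_i\}$ for every $i$. Since the vectors $v_e$ all lie in a probability simplex of affine dimension $2n-1$, B\'ar\'any's theorem applied to the $2n$ colour classes $\{v_e : e \in F_i\}$ produces rainbow edges $f_i \in F_i$ with $\mathbf{1}/(2n) \in \mathrm{conv}\{v_{f_1},\ldots,v_{f_{2n}}\}$. Multiplying the resulting coefficients by $n$ converts this convex combination into a fractional matching of value $n$ supported on $\{f_1,\ldots,f_{2n}\}$, as required.

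The main obstacle is the case $|V| > 2n$: then $K_n$ is a polytope of positive dimension, the ambient affine dimension $|V|-1$ strictly exceeds $2n-1$, and B\'ar\'any's theorem cannot be invoked directly against only $2n$ colour classes. My plan is to invoke a convex-target variant of colorful Carath\'eodory after augmenting each colour class by the shared ``slack'' vectors $\chi_v$ (representing leaving vertex $v$ unsaturated); these slacks span $\mathbb{R}^V$ and should collapse the effective dimension back to $2n-1$. As a backup, one can instead iteratively merge pairs of vertices while preserving $\nu^*(F_i) \ge n$ for every $i$, reduce to the balanced case, and then pull the rainbow matching back (merging tightens every matching constraint, so any rainbow fractional matching of value $n$ found in the merged graph lifts). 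In either approach the crux is the same: one must show that the structure of the $F_i$'s leaves enough slack to close the dimension gap, which I expect to require an Edmonds--Gallai-style structural analysis of the tight fractional vertex covers of the $F_i$'s.
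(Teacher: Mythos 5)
Your reduction of the statement to a geometric one is correct: the equivalence $\nu^*(F)\ge n \iff \mathrm{conv}\{v_e : e\in F\}\cap K_n\neq\emptyset$ checks out, and in the special case $|V|=2n$ the target collapses to the single point $\mathbf{1}/(2n)$, so Bár\'any's colorful Carath\'eodory theorem with $2n$ colour classes in the $(2n-1)$-dimensional simplex does finish the job exactly as you say. But the theorem is not about graphs on $2n$ vertices, and everything you offer for $|V|>2n$ is a plan, not a proof -- and the plan is precisely where the entire content of the theorem lives. (Note also that the paper you are working from gives no proof of this statement; it is quoted from the Aharoni--Holzman--Jiang paper, whose proof indeed goes through Bár\'any-type machinery, but whose main work is exactly the step you have deferred.)

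Concretely: (a) There is no off-the-shelf ``colorful Carath\'eodory with a convex target and a free common set of slack vectors'' that you can invoke. With only $2n$ colour classes sitting in affine dimension $|V|-1>2n-1$, the standard theorem simply does not apply, and the heuristic that the slacks $\chi_v$ ``span $\mathbb{R}^V$ and should collapse the effective dimension'' is not an argument -- quotienting by the span of the slacks kills the whole space, and a rainbow selection is not allowed to draw on a shared pool without some new lemma quantifying how many classes are needed. Formulating and proving such a lemma (a Bár\'any-type statement for a convex target $K_n$ that is full-dimensional inside the simplex, with the number of classes governed by $n$ rather than by $|V|$) is essentially the key lemma of the cited proof, so your proposal assumes the hard part. (b) In the merging backup, the pull-back direction is fine (merging only tightens the degree constraints, so a fractional matching in the quotient lifts), but the forward direction is exactly the unproven crux: you must exhibit a single partition of $V$ into $2n$ classes for which $\nu^*(F_i)\ge n$ survives \emph{simultaneously for all} $2n$ sets $F_i$, each of which has its own, different optimal fractional matchings; merging can strictly decrease $\nu^*$ (edges inside a class are destroyed, all other constraints tighten), and you give no mechanism for choosing a partition compatible with all $2n$ sets at once. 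You acknowledge that closing this gap ``is the crux'' and would need further structural analysis -- which is an accurate self-assessment: as it stands the proposal proves only the $|V|=2n$ case and leaves the theorem itself open.
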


This is generalized in \cite{ahj} to $r$-uniform $r$-partite hypergraphs, as follows: for any integer $r$ and any real number $m$, any  $\lceil rm-r+1 \rceil$ fractional matchings of size $m$ in an $r$-partite hypergraph have a rainbow fractional matching of size $m$.

There is a family of examples showing that  for $n$ even $2n-1$ matchings of size $n$  do not necessarily have a rainbow matching of size $n$. Its construction is based on the following observation:

\begin{observation}
  Let  $e=v_iv_j$  be an edge connecting two vertices of $C_{2k}=v_1v_2\ldots v_{2k}$. Then $e$ belongs to a perfect matching contained in $E(C_{2k}) \cup \{e\}$ if and only if $j-i$ (the length of $e$) is odd.
\end{observation}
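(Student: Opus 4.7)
The plan is to give a short parity argument based on removing the endpoints of $e$ from the cycle and analyzing the two arcs that remain.

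Suppose $e=v_iv_j$ with $1 \le i < j \le 2k$. Any perfect matching $M \subseteq E(C_{2k}) \cup \{e\}$ that contains $e$ must match the remaining $2k-2$ vertices using only edges of $C_{2k}$. Deleting $v_i$ and $v_j$ splits $C_{2k}$ into two vertex-disjoint paths $P_1 = v_{i+1}v_{i+2}\cdots v_{j-1}$ and $P_2 = v_{j+1}v_{j+2}\cdots v_{2k}v_1\cdots v_{i-1}$, and $M\setminus \{e\}$ must be a perfect matching of $P_1 \cup P_2$ consisting of edges of $C_{2k}$, i.e.\ of edges of these two paths. Since a path admits a perfect matching using its own edges if and only if it has an even number of vertices, the existence of such an $M$ is equivalent to both $|V(P_1)|$ and $|V(P_2)|$ being even.

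Now $|V(P_1)| = j - i - 1$ and $|V(P_2)| = 2k - (j - i) - 1$. Because $2k$ is even, these two quantities have the same parity, so both are even precisely when $j-i-1$ is even, i.e.\ when $j - i$ is odd. This gives the desired equivalence in both directions: if $j-i$ is odd, one takes the unique perfect matchings of $P_1$ and $P_2$ together with $e$, while if $j-i$ is even, neither $P_1$ nor $P_2$ has a perfect matching in its own edges, so no perfect matching of $C_{2k}$ in $E(C_{2k}) \cup \{e\}$ can use $e$.

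There is essentially no obstacle here; the only thing to be mindful of is that ``length'' is well-defined modulo the cycle, which is automatic because $2k$ is even forces $j-i$ and $2k-(j-i)$ to share parity. I would keep the write-up to a few lines, since it is really just the observation that a path on an odd number of vertices has no perfect matching.
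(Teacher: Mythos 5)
Your argument is correct and is essentially the paper's own proof: the paper's "even number of enclosed vertices on each side of $e$" is exactly your condition that both paths $P_1$ and $P_2$ obtained by deleting $v_i,v_j$ have an even number of vertices, with the same parity bookkeeping. No issues to flag.
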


The necessity of the condition follows from the fact that  in order for $e$ to participate in a perfect matching it has to
enclose an even number of vertices on each of its sides,  since the enclosed vertices have to be matched within themselves.  The sufficiency follows from the fact that if the number of enclosed vertices is even, they can be matched within the cycle.

\begin{example}\label{basicexample}
To the system of matchings $M_i$ in Example \ref{example} add a matching
 $C$ of size $n$, all of whose edges are of even length.
 The obtained family,  consisting of $2n-1$ matchings, does not possess a rainbow matching of size $n$, since an even length edge cannot be completed by edges from the initial system of matchings to a perfect matching.

\end{example}
Note that a  matching $C$ as above   exists if and only if $n$ is even.
The necessity of the evenness  condition follows from the fact that for $C$ as above,
$\sum \{i+j \mid v_iv_j \in C\}$ is even, and on the other hand it is equal to $\sum \{i  \mid 0 \le i \le 2n-1\}=  \binom{2n}{2}$. The sufficiency is shown by a simple construction.

The aim of this paper is to prove a weaker version of Conjecture \ref{bgs}:

\begin{theorem}\label{main}
$3n-2$ matchings of size $n$ in any graph have a   rainbow matching of size $n$.
\end{theorem}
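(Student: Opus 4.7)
The plan is to proceed by contradiction. Assume $\cm = (M_1, \ldots, M_{3n-2})$ admits no rainbow matching of size $n$, and let $R = \{r_1, \ldots, r_k\}$ be a rainbow matching of maximum size, with $r_j \in M_{i_j}$ and $k < n$. Write $K = \{M_{i_1}, \ldots, M_{i_k}\}$ and $\cf = \cm \setminus K$, so $|\cf| = 3n-2-k$. The maximality of $R$ forces every edge of every $F \in \cf$ to meet $V(R)$ (otherwise $R$ would extend directly), which yields $n \le 2k$ and hence $k \ge n/2$ and $|\cf| \ge 2n-1$. Moreover, each $F \in \cf$ has at least $2n-2k$ \emph{extending} edges --- edges with one endpoint in $V(R)$ and one outside --- which are the instruments for local swap operations.

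The core mechanism is a swap: if $e = \{u,v\}$ is an extending edge of $F \in \cf$ with $u$ lying in some $r = \{u,u'\} \in R$ taken from $M_{i_j}$, then $(R \setminus \{r\}) \cup \{e\}$ is a rainbow matching of the same size $k$ using the palette $(K \setminus \{M_{i_j}\}) \cup \{F\}$, with $u'$ now exposed and $M_{i_j}$ freed. Iterating --- chaining swaps through newly exposed vertices via extending edges of still-free matchings --- traces an \emph{alternating rainbow path}; whenever the outer endpoint of an extending edge along this path lies outside $V(R)$, the resulting rainbow matching has size $k+1$, contradicting maximality. In a bipartite graph, the count $|\cf| \ge 2n-1$ is known to force such an augmentation, which is exactly how Theorem~\ref{drisko} is proved.

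The main obstacle, and the source of the $n-1$ gap between the bound $3n-2$ and Drisko's $2n-1$, is that in a general graph the alternating rainbow path may instead close into an odd cycle through $V(R)$, blocking straightforward augmentation. My strategy would be to absorb such obstructions using the surplus of $n-1$ matchings available in $\cf$: since $|R|=k\le n-1$, the number of obstructing odd cycles that can simultaneously arise is limited, and a rerouting argument --- analogous in spirit to blossom contraction in Edmonds' matching algorithm --- should let one dismantle each obstruction at the cost of only a bounded number of further matchings from $\cf$. The crux, and surely the hardest step of the proof, is the precise bookkeeping: one must verify that the free budget $3n-2-k$ strictly exceeds the worst-case combined cost of augmenting paths plus odd-cycle resolutions, thereby yielding the required contradiction.
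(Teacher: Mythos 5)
Your opening moves are sound (the maximal rainbow matching $R$ of size $k<n$, the observation that every unused matching meets $V(R)$, and the reduction to finding a multicolored augmenting alternating path), but what follows is a plan rather than a proof, and the step you yourself label ``the crux'' and ``surely the hardest step'' is precisely the missing content. You assert that the obstructing odd cycles are ``limited'' in number and that each can be dismantled ``at the cost of only a bounded number of further matchings,'' but you give no mechanism for the rerouting, no bound on the cost, and no accounting that ends in $3n-2$; as stated there is no argument that the budget $3n-2-k$ suffices, so the contradiction is never actually reached. Note also that the bound you would need is not a per-blossom charge at all: the obstructions are not local objects you can pay for one at a time, since adding an edge from a new matching changes the whole reachability structure.

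The paper closes this gap with a different and cleaner mechanism. Taking $F$ to be the maximum rainbow matching and $K$ the set of colored edges selected so far (one per used auxiliary path), it proves via the Gallai--Edmonds decomposition that \emph{every} augmenting $F$-alternating path contains an \emph{enriching} edge, i.e.\ an edge $e$ with $OR(K\cup\{e\},F)\supsetneqq OR(K,F)$, where $OR(K,F)$ is the set of vertices reachable from an $F$-exposed vertex by an odd $K$--$F$-alternating path (Lemma \ref{mainlemma}). Since each unused matching $M_i$ has size $n>k$ and hence yields an augmenting $F$-alternating path, one can grow $OR(K,F)$ strictly with each new matching used; as $OR(K,F)\subseteq\bigcup F$ has at most $2k$ vertices, after at most $2k+1$ matchings the set escapes $\bigcup F$ and a multicolored augmenting path exists (Corollary \ref{maincor}), enlarging the rainbow matching. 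The arithmetic is then immediate: $k\le n-1$ gives $3n-2-k\ge 2n-1\ge 2k+1$ unused matchings, which is exactly where $3n-2$ comes from. Your sketch contains no analogue of the enriching-edge lemma (nor of the reachability-growth potential function that replaces your ``bookkeeping''), so the essential idea of the proof is absent.
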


\section{Preliminaries and notation}
We shall use the following  notation concerning paths. The first vertex on a path $P$ is denoted by $in(P)$, and its last vertex by $ter(P)$. The edge set of $P$ is denoted by $E(P)$, and its vertex set by $V(P)$.  For a path $P$ and a vertex $v$ on it, we denote by $Pv$ the part of $P$ between $in(P)$ and $v$, including $in(P)$ and $v$,  and by $vP$ the part of $P$ from $v$ to $ter(P)$,  including $v$ and $ter(P)$. If $P,Q$ are paths such that $in(Q)=ter(P)=v$ we shall use the notations $P*Q$,  or  $PvQ$, or also $PQ$, for the trail (namely a path that is not necessarily simple) resulting from the concatenation of $P$ and $Q$. Our paths are usually undirected, but sometimes we shall take them in one of their two possible directions. In such a case, we denote by $\overleftarrow{P}$ the path $P$ traversed in the opposite direction to that of $P$.

For a set $J$ of edges $\bigcup J$ is the set of vertices participating in $J$.

Let $F$ be a matching in a graph, and let $K$ be a set of edges disjoint from $F$. A path $P$ is said to be $K-F$-{\em alternating} if every odd-numbered edge of $P$ belongs to $K$ and every even-numbered edge belongs to $F$.  If there is no restriction on
the odd edges of $P$ then we just say that it is $F$-alternating. If $in(P) \not \in  \bigcup F$ we say that $P$ is {\em free-starting} and if $ter(P) \not \in  \bigcup F$ we say that $P$ is {\em free-ending} (the identity of $F$ is suppressed here). An alternating path that is both free-starting and free-ending is said to be {\em augmenting}. The origin of the name is that if $P$ is augmenting then $E(P) \bigtriangleup F$ is a matching larger than $F$. The converse is also  true:

\begin{lemma}\label{aug}
If $F,G$ are  matchings and $|G|>|F|$ then $E(F) \cup E(G)$ contains an $F$-alternating augmenting path.
\end{lemma}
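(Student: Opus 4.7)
The plan is to carry out the standard Berge-type argument on the symmetric difference $H = E(F) \bigtriangleup E(G)$, with only a small amount of extra care to accommodate the definition of ``$F$-alternating'' used here and the possibility that $F\cap G\neq\emptyset$.

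First I would observe that since $F$ and $G$ are each matchings, every vertex of $H$ has degree at most $2$, so $H$ decomposes into a disjoint union of simple paths and simple cycles whose edges alternate between $F\setminus G$ and $G\setminus F$. Consequently each cycle contributes equally many edges to $F\setminus G$ and to $G\setminus F$, while each path contributes numbers differing by at most one. From $|G|>|F|$ we get $|G\setminus F|>|F\setminus G|$, so by a counting argument some path component $P$ of $H$ must have strictly more $G\setminus F$-edges than $F\setminus G$-edges. Such a $P$ necessarily begins and ends with edges from $G\setminus F$, hence its edges alternate $G,F,G,F,\ldots,G$, which matches the paper's convention that even-numbered edges lie in $F$ (so $P$ is $F$-alternating).

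It remains to verify that $P$ is free-starting and free-ending. The main thing to check carefully is that an endpoint $v$ of $P$ cannot be incident to an $F$-edge $e=vw$ that is secretly shared with $G$: if $e\in F\cap G$, then the last edge of $P$ at $v$ (which lies in $G$) and $e$ are two $G$-edges at $v$, forcing them to coincide by the matching property of $G$; but then $e$ would lie in $F\cap G$ and not in $H$, contradicting $e\in E(P)\subseteq H$. If instead $e\in F\setminus G$, then $e\in H$, and $e$ would extend $P$ at $v$, contradicting the maximality of $P$ as a component of $H$. Hence $v\notin\bigcup F$, and likewise for the other endpoint, so $P$ is augmenting.

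The one subtlety that could trip up a careless write-up is the $F\cap G$ case just described; everything else is routine bookkeeping on a graph of maximum degree $2$. Once the endpoint argument is nailed down, the lemma follows immediately since $E(P)\subseteq E(F)\cup E(G)$ by construction.
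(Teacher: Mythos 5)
Your proof is correct and follows essentially the same route as the paper's: the paper decomposes $E(F)\cup E(G)$ viewed as a multigraph (so edges of $F\cap G$ become digons), while you decompose the symmetric difference and then verify by hand that the endpoints of the surplus path are not covered by $F$, including via edges of $F\cap G$. The endpoint check you spell out is exactly the detail that the paper's multigraph convention absorbs, so nothing is missing and no further comment is needed.
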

 \begin{proof}
 Viewed as a multigraph, the connected components of $E(F) \cup E(G)$ are cycles (possibly digons) and paths that alternate between $G$ and $F$ edges. Since $|G|>|F|$ one of these paths contains more edges from $G$ than from $F$, and is thus $F$-augmenting.
 \end{proof}

\begin{definition}
Let $F$ be a matching, let $K$ be a set of edges disjoint from $F$, and let $a$ be any vertex.
A vertex $v$ is said to be {\em oddly $K$-reachable} (resp. {\em evenly $K$-reachable}) from $a$ if there
exists an odd (respectively even)  $K-F$-alternating path starting with an edge $ab \in K$ and ending at $v$. Being an odd alternating path means ending with an edge from $K$, and being an even alternating path means ending with an edge of $F$.
Let $OR(a,K,F)$ be the set of vertices oddly reachable  from $a$, $ER(a,K,F)$ the set of vertices evenly reachable  from $a$, and let $DR(a,K,F)= OR(a,K,F) \cap ER(a,K,F)$.  Let $OR(K,F)$  (respectively $ER(K,F)$) be the set of oddly $K$-reachable (respectively evenly $K$-reachable) vertices from some vertex not belonging to $\bigcup F$.
\end{definition}

Note that $V(G) \setminus \bigcup F \subseteq ER(K,F)$, since a vertex not matched by $F$ has a zero length alternating path to itself.  Note also that there exists a $K-F$ augmenting alternating path if and only if $OR(K,F) \not \subseteq \bigcup F$.

\begin{definition}
A graph $G$ is called {\em hypomatchable} if $G-v$ has a perfect matching for every $v \in V(G)$.

\end{definition}
Another term used for such graphs is {\em factor-critical.}



\begin{lemma}\label{hypovertex}
Let $F$ be a matching in a graph $G$, let $K =E(G)\setminus F$, and suppose that  $V(G) \setminus \bigcup F$ consists of a single vertex $a$. Then a vertex $x$ belongs to $ER(a,K,F)$ if and only if $G - x$ has a perfect matching.
\end{lemma}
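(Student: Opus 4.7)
The plan is to handle both implications by the standard symmetric-difference argument, exploiting the fact that for any two matchings $F_1, F_2$ in $G$ the subgraph $F_1 \bigtriangleup F_2$ has maximum degree $2$, and that a vertex has degree one in it precisely when it is saturated by exactly one of $F_1$ and $F_2$. The hypothesis $V(G) \setminus \bigcup F = \{a\}$ will then pin down the endpoints of the relevant alternating paths.

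For the forward direction, I assume $x \in ER(a,K,F)$. If $x = a$, then $F$ itself is a perfect matching of $G - a$, so I may assume $x \neq a$ and pick an even $K$-$F$-alternating path $P$ from $a$ to $x$; by definition $P$ starts with an edge of $K$ incident to $a$ and ends with an edge of $F$ incident to $x$. I set $M = F \bigtriangleup E(P)$. Since the edges of $P$ alternate $K,F,K,F,\dots,K,F$, the $F$-partner of every interior vertex of $P$ on the path is replaced by an adjacent $K$-edge, $a$ becomes matched to the second vertex of $P$, and $x$ becomes unmatched, while the vertices outside $V(P)$ keep their $F$-partners. Because $a$ was the unique $F$-unsaturated vertex, $M$ saturates $V(G) \setminus \{x\}$ and is therefore a perfect matching of $G - x$.

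For the converse, let $F'$ be a perfect matching of $G - x$. The case $x = a$ follows from the zero-length path convention noted right after the definition, so assume $x \neq a$ and consider $H = F \bigtriangleup F'$. Since every vertex other than $a$ is $F$-saturated and every vertex other than $x$ is $F'$-saturated, the vertices of odd degree in $H$ are exactly $a$ and $x$; all other components of $H$ are alternating cycles or isolated vertices. Hence the component of $H$ containing $a$ is a path $P$ from $a$ to $x$, whose edges alternate between $F' \setminus F$ and $F \setminus F'$. Because $F' \setminus F \subseteq E(G) \setminus F = K$, the path $P$ is automatically $K$-$F$-alternating; it begins with an edge of $F'$ (hence in $K$) at $a$ since $a \notin \bigcup F$, and ends with an edge of $F$ at $x$ since $x \notin \bigcup F'$. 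Its length is therefore even, and so $x \in ER(a,K,F)$.

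I do not anticipate any substantive obstacle; the two bookkeeping points to keep honest are the parity of $P$ in each direction and the identification of its first edge as an element of $K$, both of which follow automatically from the uniqueness of the $F$-exposed vertex $a$.
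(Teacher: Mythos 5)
Your proof is correct and takes essentially the same approach as the paper: the forward direction uses $F \bigtriangleup E(P)$ exactly as the paper does, and the converse extracts the alternating $a$--$x$ path from $F \bigtriangleup F'$, which is the same argument the paper phrases as following the $F$-$M$-alternating path from $x$ to $a$. Your explicit handling of the $x=a$ case via the zero-length-path convention is a small point the paper leaves implicit, but nothing of substance differs.
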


\begin{proof}
Suppose that there exists a perfect matching $M$ of $G-x$. Then the $F-M$-alternating path
starting at $x$ with an edge of $F$ must terminate at $a$ with an edge of $M$. Reversing this path, we obtain a $K-F$ alternating path, starting at $a$ and reaching $x$, with last edge belonging to $F$. This shows that $x  \in ER(a,K,F)$. For the other direction, if  $x  \in ER(a,K,F)$ then taking $L$ to be the even (namely ending with an $F$-edge) $a-x$ $F$-alternating path reaching $x$ and letting $M=F \triangle L$ yields a perfect matching of $G-x$.

\end{proof}

For a graph (which for this purpose is a set of edges) $F$ and a vertex set $S$ we write $F[S]=\{f \in F \mid f \subseteq S\}$.
Given a matching $J$ and a vertex $v \in \bigcup J$, we denote by $J(v)$ the vertex $u$ for which $uv \in J$. Clearly, $x  \in OR(a,K,F)$ if and only if $F(x)$  belongs to $ER(a,K,F)$. Hence Lemma \ref{hypovertex} implies:

\begin{corollary}\label{doublereachability}
Let $F$ be a matching in a graph $G$, covering all vertices apart from one vertex, $a$. Let $K=E(G) \setminus F$. Then $G$ is hypomatchable if and only if
$V(G)=DR(a,K,F)$.
\end{corollary}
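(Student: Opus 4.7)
The plan is to reduce the claim to Lemma \ref{hypovertex} combined with the remark just before the corollary, namely that $x \in OR(a,K,F)$ iff $F(x) \in ER(a,K,F)$ whenever $x \in \bigcup F$.

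The ``if'' direction is immediate: if $V(G)=DR(a,K,F)$ then, since $DR\subseteq ER$, we have $V(G)=ER(a,K,F)$, and Lemma \ref{hypovertex} turns this into a perfect matching of $G-v$ for every $v\in V(G)$, so $G$ is hypomatchable. For the ``only if'' direction I would assume $G$ is hypomatchable; Lemma \ref{hypovertex} then yields $V(G)=ER(a,K,F)$, so the task reduces to showing $V(G)\subseteq OR(a,K,F)$. For any $x\neq a$ the remark takes care of this, because $F(x)\in V(G)=ER(a,K,F)$ forces $x\in OR(a,K,F)$.

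The one subtlety, and the step I expect to be the only real obstacle, is placing $a$ itself in $OR(a,K,F)$, since the remark only speaks to vertices covered by $F$. To handle this, I would pick any neighbor $b$ of $a$ (which exists whenever $|V(G)|\ge 3$) and, using hypomatchability, take a perfect matching $M$ of $G-b$. A degree count shows that $a$ and $b$ are the only degree-one vertices of $F\triangle M$, so the component of $F\triangle M$ containing $a$ is an alternating path $P$ from $a$ to $b$; its first edge lies in $M\setminus F\subseteq K$ and its last edge is the $F$-edge $F(b)b$ (with $F(b)\neq a$, since $a\notin\bigcup F$). Thus $P$ is a $K$--$F$-alternating path of even length from $a$ to $b$, and appending the $K$-edge $ba$ produces an odd $K$--$F$-alternating walk from $a$ back to $a$, showing $a\in OR(a,K,F)$ and completing the proof.
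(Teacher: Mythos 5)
Your proof is correct and follows the same route as the paper: the corollary is obtained by combining Lemma \ref{hypovertex} with the observation that $x \in OR(a,K,F)$ if and only if $F(x) \in ER(a,K,F)$ for $x \in \bigcup F$. The only substantive difference is that you explicitly treat the vertex $a$ itself, which the paper's one-line proof passes over in silence; your argument (the even $K$--$F$-alternating $a$--$b$ component of $F \triangle M$ followed by the $K$-edge $ba$, which is not an edge of that component) produces a closed alternating trail and is valid provided the ``paths'' in the definition of $OR(a,K,F)$ are allowed to be non-simple trails, as the paper's conventions elsewhere permit. This reading is in fact forced: under a strictly simple-path interpretation $a$ could never belong to $OR(a,K,F)$, and the equality $V(G)=DR(a,K,F)$ would fail at that one vertex.
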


Combining the corollary with Lemma \ref{hypovertex} gives:

\begin{lemma}\label{hypo}
Let $H$ be a graph and $J$ a matching in it that covers all vertices except for one vertex, $a$. Let $K=E(H)\setminus J$. Then the following are equivalent:
\begin{enumerate}

\item
$V(H)=DR(a,K,J)$.

\item $V(H)=ER(a,K,J)$.

\item
$H$ is hypomatchable.
\end{enumerate}
\end{lemma}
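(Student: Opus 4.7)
The plan is to derive all three equivalences directly from the previously established Lemma \ref{hypovertex} and Corollary \ref{doublereachability}, with essentially no new combinatorial content: the lemma is really an assembly step.

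First, I would observe that $(1) \Rightarrow (2)$ is immediate from the definition, since $DR(a,K,J) = OR(a,K,J) \cap ER(a,K,J) \subseteq ER(a,K,J)$, so if $V(H) = DR(a,K,J)$ then certainly $V(H) = ER(a,K,J)$.

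Next I would prove $(2) \Rightarrow (3)$ by invoking Lemma \ref{hypovertex} once for each vertex. The hypothesis of that lemma -- that the matching covers every vertex except a single vertex $a$ -- is exactly the setting here, so the lemma applies with $G = H$, $F = J$, $K = E(H) \setminus J$. Assuming $(2)$, every vertex $x \in V(H)$ lies in $ER(a,K,J)$, and Lemma \ref{hypovertex} then guarantees that $H - x$ has a perfect matching. This holds for all $x \in V(H)$, which is precisely the definition of hypomatchable. (The case $x = a$ is unproblematic: $a \in ER(a,K,J)$ via the trivial zero-length path, and $J$ itself is a perfect matching of $H - a$.)

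Finally, $(3) \Rightarrow (1)$ is exactly the ``only if'' direction of Corollary \ref{doublereachability}. Chaining $(1) \Rightarrow (2) \Rightarrow (3) \Rightarrow (1)$ closes the loop and proves the three conditions equivalent. There is no real obstacle; the only point requiring any care is making sure that the hypothesis of Lemma \ref{hypovertex} (that $J$ leaves exactly one vertex uncovered) is carried through, which it is by assumption on $(H,J,a)$.
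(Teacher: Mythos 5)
Your proof is correct and matches the paper's (implicit) argument: the paper gives no separate proof, stating only that Lemma \ref{hypovertex} and Corollary \ref{doublereachability} combine to yield the lemma, which is exactly your assembly of $(2)\Leftrightarrow(3)$ from the lemma and $(3)\Leftrightarrow(1)$ from the corollary, packaged as the cycle $(1)\Rightarrow(2)\Rightarrow(3)\Rightarrow(1)$. Your remark about the vertex $a$ (zero-length path, $J$ being a perfect matching of $H-a$) is a fine point of care consistent with the paper's conventions.
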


\section{The main lemma}
Throughout this section and the next,
 $F$ is a maximum matching in a graph $G$, and $K=E(G) \setminus F$.
 An edge $e$ contained in $V(G)$ is called {\em enriching} if $OR(K \cup \{e\},F) \supsetneqq OR(K,F)$. In particular, an edge whose addition
to $E(G)$ generates a matching larger than $F$ is enriching. Note that if $e$ is enriching then $e \not \in E(G)$.

The key step in the proof of Theorem \ref{main} is:

\begin{lemma}\label{mainlemma}
\label{mainreach}
Any augmenting $F$-alternating path  contains an enriching edge.
\end{lemma}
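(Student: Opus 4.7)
The plan is to argue by contradiction, using induction on $\mu(P) := |E_{\mathrm{odd}}(P) \setminus K|$, the number of odd-position edges of $P$ not already in $K$. Suppose $P$ is an augmenting $F$-alternating path containing no enriching edge. If $\mu(P)=0$, then every odd edge of $P$ lies in $K$ and hence $E(P)\subseteq E(G)$, so $P$ is an $F$-augmenting alternating path inside $G$, contradicting the maximality of $F$; thus $\mu(P)\geq 1$.

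Assume $\mu(P)\geq 1$, and let $j$ be the smallest odd index with $e_j\notin K$. By minimality of $j$, the prefix $v_0 e_1 v_1\cdots e_{j-1}v_{j-1}$ is a $K$-$F$-alternating path of even length starting at the free vertex $v_0$, so $v_{j-1}\in ER(K,F)$; appending $e_j$ gives a $(K\cup\{e_j\})$-$F$-alternating odd path from $v_0$ to $v_j$, so $v_j\in OR(K\cup\{e_j\},F)$. If $v_j\notin OR(K,F)$ then $e_j$ is enriching and we are done; otherwise take a shortest odd $K$-$F$-alternating path $Q$ from a free vertex $w$ to $v_j$ and form the concatenation $\widetilde P := Q\cdot e_{j+1}v_{j+1}\cdots e_k v_k$. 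This is an $F$-alternating walk from the free vertex $w$ to the free vertex $v_k$, because $Q$ ends at $v_j$ with a $K$-edge and $e_{j+1}\in F$. Since the odd edges of $Q$ all lie in $K$, the non-$K$ odd edges of $\widetilde P$ are contained in $\{e_{j+2},e_{j+4},\ldots,e_k\}$; in particular $e_j$ has disappeared, so any simple augmenting $F$-alternating path extracted from $\widetilde P$ has $\mu$-value at most $\mu(P)-1$. If $\widetilde P$ itself is simple, the inductive hypothesis yields an enriching edge of $\widetilde P$, which must lie in $\{e_{j+2},\ldots,e_k\}\setminus K\subseteq E(P)$, contradicting the assumption on $P$.

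The main obstacle is handling the case where $\widetilde P$ fails to be simple, i.e.\ $V(Q)\cap\{v_{j+1},\ldots,v_k\}\neq\emptyset$. The key structural observation is that such intersections come in $F$-matched pairs: if $v_l$ with $j<l<k$ lies on $Q$, then the unique $F$-partner $v_{l\pm1}$ also lies on $Q$ at an adjacent position, because at any internal occurrence both $Q$ and $P$ must use the $F$-edge incident to $v_l$. Choosing the pair $\{v_{2i-1},v_{2i}\}$ on $V(Q)$ with $2i$ maximal and shortcutting $\widetilde P$ at the pair-member whose $Q$-position parity is compatible with the required $F$-alternation produces a simple augmenting $F$-alternating path in the favorable parity case. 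In the unfavorable parity case, one instead exploits $v_{2i-1}\in ER(K,F)$ together with the reverse of the prefix of $P$ (which is itself $F$-alternating, ending at the free vertex $v_0$) to build a simple augmenting $F$-alternating path whose odd non-$K$ edges all lie among $\{e_1,e_3,\ldots,e_{2i-1}\}\cap E(P)$. The boundary case $w=v_k$, which can happen only when $v_k$ itself lies on $Q$, requires a short separate argument. In each case the resulting simple path has $\mu$-value strictly less than $\mu(P)$, so the inductive hypothesis supplies an enriching edge of $P$, the desired contradiction. I expect this parity-and-simplicity bookkeeping to be the main technical difficulty.
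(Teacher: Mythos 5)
Your reduction to the case $v_j\in OR(K,F)$, the base case $\mu(P)=0$, and the treatment of the case where $\widetilde P$ is simple are all correct, and inducting on the number of non-$K$ odd edges is a genuinely different route from the paper's. But the step you defer --- turning the non-simple walk $\widetilde P$ into a simple augmenting $F$-alternating path of strictly smaller $\mu$-value --- is not bookkeeping; it is the entire non-bipartite difficulty (an $F$-alternating \emph{walk} of prescribed parity from a free vertex does not in general yield an $F$-alternating \emph{path} of that parity; this is exactly the blossom phenomenon), and the sketch you give does not close it. Two concrete failures in your ``unfavorable parity'' case, where $Q$ traverses the $F$-edge $v_{2i}v_{2i-1}$ towards $v_{2i-1}$ and you concatenate $Qv_{2i-1}$ with the reversed prefix $\overleftarrow{Pv_{2i-1}}$: first, the maximality of $2i$ only controls intersections of $Q$ with $v_{2i+1},\dots,v_{k-1}$, so $Qv_{2i-1}$ may still meet $v_0,\dots,v_{2i-2}$ and the resulting walk need not be simple; you would have to iterate the surgery with no evident termination. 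Second, the odd non-$K$ edges of the new walk are exactly $\{e_l : l \mbox{ odd},\ j\le l\le 2i-1\}\setminus K$, which still contains $e_j$; if every odd edge of $P$ beyond position $2i$ already lies in $K$ (for instance if $e_j$ is the unique non-$K$ odd edge of $P$), this set has the same cardinality as for $P$, so $\mu$ does not decrease and the induction stalls. The boundary case $w=v_k$ is likewise asserted but not argued.

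For comparison, the paper avoids walk surgery altogether: it fixes a Gallai--Edmonds decomposition as in Theorem~\ref{gallaiedmonds} whose perfectly matched part is containment-wise maximal (Lemma~\ref{betterge}), proves that an edge outside $E(G)$ is enriching exactly when it joins two distinct hypomatchable components or a component to the perfectly matched part (Lemma~\ref{bridge}), and then observes that any augmenting path, tracked through the decomposition, must contain such an edge. If you want to keep your inductive scheme you would in effect have to reprove the relevant walk-to-path conversion, which amounts to an Edmonds-type blossom argument or to rederiving this structure theory; as written, the proposal has a genuine gap at precisely that point.
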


For clarity, let us remind that an augmenting path is not contained in $G$.

To prove the lemma, we shall use the Gallai-Edmonds decomposition theorem.
There are many formulations of this theorem, of which we shall choose the following.

\begin{theorem}\label{gallaiedmonds}  \cite{edmonds,gallai, lp, goemans}

Let $F$ be a maximum matching in a graph $G$.  Then the vertex set of  $G$ can be decomposed into  disjoint sets  $Q,R,S$, so that the following hold:

\begin{enumerate}
\item
 $F[Q]$ is a perfect matching in $Q$.

\item
$R$ is the union of sets $V(H_i), ~i \in I$, where each $H_i$ is a hypomatchable connected component of $G-S$, and $F[V(H_i)]$ matches all of $V(H_i)$ except for one vertex $r_i$.
\item
$S \subseteq \bigcup F$, and for every $s \in S$  we have $F(s) =r_i$ for some $i \in I$.

\end{enumerate}

\end{theorem}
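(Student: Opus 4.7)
The approach is proof by contradiction. Assume $P = u_0 u_1 \cdots u_{2k-1}$ is an $F$-augmenting $F$-alternating path containing no enriching edge. I will show by induction on $j$ that $u_{2j+1} \in OR(K,F)$ for every $j \in \{0,1,\ldots,k-1\}$. Taking $j = k-1$ gives $u_{2k-1} \in OR(K,F)$; but $u_{2k-1}$ is free, while the maximality of $F$ forces $OR(K,F) \subseteq \bigcup F$ (any odd $K$-$F$ alternating path from a free vertex to another free vertex would be an $F$-augmenting path in $G$, contradicting the choice of $F$). This is the target contradiction.

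For the base case $j = 0$, the length-one trail $u_0 e_1 u_1$ is an odd $(K \cup \{e_1\})$-$F$ alternating trail starting at the free vertex $u_0$, so $u_1 \in OR(K \cup \{e_1\}, F)$; if $e_1 \in E(G)$ this already gives $u_1 \in OR(K,F)$, and otherwise the non-enriching hypothesis on $e_1$ gives the same conclusion. For the inductive step, start from $u_{2j-1} \in OR(K,F)$, witnessed by an odd $K$-$F$ alternating trail $T$ from some free vertex to $u_{2j-1}$. In the generic situation I first append the $F$-edge $u_{2j-1} u_{2j}$ to $T$ to obtain an even trail to $u_{2j}$, then append the edge $e_{2j+1}$ to obtain an odd trail to $u_{2j+1}$ inside $K \cup \{e_{2j+1}\}$; the non-enriching hypothesis on $e_{2j+1}$ (with the case $e_{2j+1} \in E(G)$ being trivial) then yields $u_{2j+1} \in OR(K,F)$, as required.

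The main obstacle is the exceptional situation in which one of the two edges to be appended already occurs in $T$, violating the distinct-edges property of an alternating trail. Here my plan is to invoke the Gallai--Edmonds decomposition $(Q,R,S)$ of $G$ with respect to $F$ (Theorem \ref{gallaiedmonds}): an edge conflict at $u_{2j-1} u_{2j}$ or at $e_{2j+1}$ produces a closed alternating sub-walk at $u_{2j-1}$ or $u_{2j}$, which pins these vertices inside a single hypomatchable component $H_i$ of $G - S$. Lemma \ref{hypo} applied to $H_i$ then supplies an alternating trail internal to $H_i$ of the desired parity reaching $u_{2j}$, and this internal trail can be spliced onto a prefix of $T$ that stays outside the offending closed portion. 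Carrying out this splicing cleanly when both potential conflicts occur simultaneously, and verifying that the spliced object is a genuine alternating trail beginning at a free vertex of $G$, is the main technical burden of the proof; once this is handled, the induction runs to completion and forces the promised contradiction.
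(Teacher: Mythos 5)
Your proposal does not address the statement at hand. The statement is the Gallai--Edmonds decomposition theorem: given a maximum matching $F$ in a graph $G$, the vertex set decomposes into sets $Q,R,S$ with the three listed properties. This is a classical structural result, which the paper cites to Edmonds, Gallai, Lov\'asz--Plummer and Goemans and does not prove; nothing in your argument establishes the existence of such a decomposition. What you have written is instead an attempted proof of Lemma \ref{mainlemma} --- that every augmenting $F$-alternating path contains an enriching edge --- which is a different assertion altogether. Worse, your argument explicitly \emph{invokes} Theorem \ref{gallaiedmonds} to handle the ``exceptional situation'' of repeated edges, so even if one tried to read it as a proof of that theorem it would be circular. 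If the goal really is Theorem \ref{gallaiedmonds}, you need an entirely different argument (e.g., via the Tutte--Berge formula or Edmonds' blossom algorithm); as it stands there is simply no proof of the claimed decomposition.

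As a secondary point: even judged as a proof of Lemma \ref{mainlemma}, the argument is incomplete at precisely the step you yourself flag as ``the main technical burden.'' The splicing of an internal alternating trail of a hypomatchable component $H_i$ onto a prefix of the witnessing trail $T$ is only described, not carried out, and the difficulty is real: the prefix of $T$ may itself pass through $H_i$ and collide with the internal trail supplied by Lemma \ref{hypo}, and one must also guarantee that the resulting object alternates correctly at the junction. The paper sidesteps these collision issues by first refining the Gallai--Edmonds decomposition so that $OR(K,F)$ and $ER(K,F)$ are characterized exactly (Lemma \ref{betterge}), then classifying the enriching edges (Lemma \ref{bridge}), and only then walking along the augmenting path; your edge-by-edge induction on the path would need the splicing step made fully rigorous before it could be accepted.
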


\begin{remark}
A more common formulation of the theorem is ``There exists a maximum matching for which there is a decomposition...'' The stronger version appearing here, ``For every maximum matching there exists a decomposition...'' follows easily from the seemingly weaker one.
\end{remark}

Let  $X=\bigcup F$  and $Y=V(G) \setminus X$.  Let $J =\{i \in I \mid r_i \in X\}$,  and $D=I \setminus J$.  For every $j \in J$ let $s_j=F(r_j)$.


Clearly,
\begin{equation}\label{y}
Y =\{r_d \mid d \in D \}.
\end{equation}

Next we wish to characterize the vertices of  $OR(K,F)$. For this purpose we study  how does a free-starting $K-F$ alternating path look like. To avoid confusion, note that we are not speaking here about  $F$-alternating paths as in Lemma \ref{mainlemma},  that are not contained in $K \cup F$.

Let  $B$ be a free starting $K-F$-alternating path.
 By \eqref{y},  $in(B) = r_d$ for some $d \in D$.

\begin{assertion}
 $V(B) \cap  V(H_i) =\emptyset$ for every $i \in D \setminus \{d\}$.
\end{assertion}

 \begin{proof}
Assume that $B$ meets $V(H_i)$ for some $i\neq d$ in $D$.     Let $t$ be the first vertex of $B$ belonging to $V(H_i)$, and let $P$ be the path witnessing the fact that $t \in ER(r_d,K,F)$ (see Lemma \ref{hypovertex}). Then the path $Bt\overleftarrow{P}$ ending at $r_i$
is augmenting, contradicting the maximality of $F$.
\end{proof}

By the assertion and the fact that vertices in each $H_i$ are connected  only to vertices in $V(H_i) \cup S$, we must have

$$B=r_ds_{j_1}r_{j_1}P_1s_{j_2}r_{j_2}P_2\ldots $$

where $d \in D$ and $P_k$ is a path inside $H_{j_k}$. In words, the path starts at a vertex $r_d$ for some $d \in D$; its first edge is $r_ds_{j_1}$ for some $j_1 \in J$; its second edge is $s_{j_1}r_{j_1}$; it then roams $V(H_{j_1})$; when leaving $H_{j_1}$ it goes to $s_{j_2}$, then it roams $H_{j_2}$ and so forth.

By the above,   if $B$ meets $V(H_j)$ for some $j \in J$, then  $B$ must contain the edge $s_jr_j$, traversed in this direction.

This analysis, together with Corollary \ref{doublereachability} show that:

\begin{assertion}\label{er}
$ER(K,F) \subseteq \bigcup_{i \in I} V(H_i)$.
\end{assertion}

and:

\begin{assertion}\label{orcontained}
$OR(K,F) \subseteq S \cup \bigcup_{i \in I} (V(H_i)\setminus \{r_i\})$.
\end{assertion}

Our next step is  to show that it is possible to choose the Gallai-Edmonds decomposition in such a way that equality holds in the last two  assertions.

\begin{lemma}\label{betterge}
If $Q$ is containment-wise maximal
among all Gallai-Edmonds decompositions $(Q,R,S)$, then

\begin{equation}\label{eveneq}
ER(K,F) = \bigcup_{i \in I} V(H_i),
\end{equation}

and

\begin{equation}\label{oddeq}
OR(K,F) = S \cup \bigcup_{i \in I} (V(H_i)\setminus \{r_i\}).\end{equation}
\end{lemma}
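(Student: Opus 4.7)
The two $\subseteq$ inclusions in \eqref{eveneq} and \eqref{oddeq} are already furnished by Assertions \ref{er} and \ref{orcontained}, so only the reverse containments need proof. The plan is to reduce the whole lemma to the single claim
\[
(*) \qquad s_i \in OR(K,F) \text{ for every } i \in J,
\]
and then establish $(*)$ by exploiting the maximality of $Q$. Granting $(*)$, the reverse inclusions go as follows. For $i \in D$, the vertex $r_i$ is free and $H_i$ is hypomatchable, so Lemma \ref{hypo} applied inside $H_i$ produces both an even and an odd $K-F$-alternating path from $r_i$ to any prescribed vertex of $V(H_i)$, entirely inside $V(H_i)$. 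For $i \in J$, I witness $s_i \in OR(K,F)$ by a simple alternating path $B$ from some free $r_d$ to $s_i$, concatenate the $F$-edge $s_ir_i$ to reach $r_i$ evenly, then extend inside $H_i$ via Lemma \ref{hypo} by a path of the required parity. Simplicity of the concatenation comes for free from the structural description of free-starting alternating paths preceding Assertion \ref{er}: since $B$ ends at $s_i$, it cannot contain the edge $s_ir_i$ (that would force $s_i$ to appear twice), hence $V(B) \cap V(H_i) = \emptyset$.

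To prove $(*)$, I will argue by contradiction. Suppose $T := \{i \in J : s_i \notin OR(K,F)\}$ is non-empty, and set $W := \bigcup_{i \in T}(\{s_i\} \cup V(H_i))$. The plan is to verify that
\[
Q' := Q \cup W, \quad R' := R \setminus {\textstyle\bigcup_{i \in T}} V(H_i), \quad S' := S \setminus \{s_i : i \in T\}
\]
is again a Gallai-Edmonds decomposition for $F$, which contradicts the containment-maximality of $Q$. Condition (1) of Theorem \ref{gallaiedmonds} is immediate, since on each block $\{s_i\} \cup V(H_i)$ the matching $F$ already pairs $s_ir_i$ and matches the remaining vertices of $V(H_i)$ internally, so $F[Q']$ is a perfect matching on $Q'$. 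Condition (3) also persists, since the surviving vertices of $S'$ still lie in $\bigcup F$ and are still $F$-matched to vertices of $R'$.

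The main obstacle is verifying Condition (2): each $H_{i'}$ with $i' \in I \setminus T$ must still be a connected component of $G - S'$. Because $S' \subseteq S$, this can only fail through a new edge of $G$ joining some $v \in V(H_{i'})$ to some $s_i$ with $i \in T$, and I will rule such an edge out by producing, from it, an odd $K-F$-alternating path from a free vertex to $s_i$ — which contradicts $i \in T$. There are two cases. If $i' \in D$, use Lemma \ref{hypo} inside $H_{i'}$ to reach $v$ from the free vertex $r_{i'}$ by an even alternating path, then append the $K$-edge $vs_i$. If $i' \in J \setminus T$, take an alternating path $B'$ witnessing $s_{i'} \in OR(K,F)$, extend by $s_{i'}r_{i'}$ into $H_{i'}$, continue inside $H_{i'}$ from $r_{i'}$ to $v$ via Lemma \ref{hypo}, and append $vs_i$. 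The delicate book-keeping — which I expect to be the subtlest point — is verifying that the assembled path is simple: the structural observation forces $V(B') \cap V(H_{i'}) = \emptyset$, and if $B'$ were to pass through $s_i$, then by the same structural description its prefix up to $s_i$ would already be an odd alternating path from a free vertex to $s_i$, again contradicting $i \in T$.
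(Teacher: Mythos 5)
Your proposal is correct, and it reaches the goal by a leaner route than the paper, although the underlying strategy is the same: contradict the containment-maximality of $Q$ by absorbing whole blocks $\{s_i\}\cup V(H_i)$ into $Q$, the only nontrivial check being that no edge joins the enlarged $Q$ to the surviving components. The differences are in the bookkeeping. The paper first builds an auxiliary bipartite graph $\Gamma$ on $I\cup S$ with the matching $Z=\{is \mid F(s)=r_i\}$, defines the kept blocks $I',S'$ by $Z$-alternating reachability in $\Gamma$, proves separately that the kept blocks lie in $OR(K,F)$ by translating $\Gamma$-paths into $K$--$F$-alternating paths of $G$, and verifies the no-edge condition by a one-line augmenting extension of a $Z$-alternating path in $\Gamma$; maximality then forces $I'=I$, $S'=S$. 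You dispense with $\Gamma$ entirely: you first observe that both \eqref{eveneq} and \eqref{oddeq} reduce (given Assertions \ref{er} and \ref{orcontained} and Lemma \ref{hypo}) to the single statement $S\subseteq OR(K,F)$, you choose the blocks to move directly as $T=\{i\in J: s_i\notin OR(K,F)\}$, and you verify the no-edge condition by explicit path surgery in $G$, using the structural description of free-starting $K$--$F$-alternating paths (entering $V(H_j)$, $j\in J$, forces the edge $s_jr_j$ in that direction, and every visit to an $S$-vertex is via a $K$-edge, so prefixes ending in $S$ are odd) together with Lemma \ref{hypo}; the delicate simplicity checks ($V(B')\cap V(H_{i'})=\emptyset$ because $B'$ terminates at $s_{i'}$, and $s_i\notin V(B')$ by the prefix-parity argument) do go through. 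What each approach buys: yours is more self-contained, isolates exactly what the maximality hypothesis is used for, and avoids introducing an auxiliary object; the paper's $\Gamma$-formulation makes the ``new edge'' verification essentially mechanical and, as a by-product, identifies the maximal-$Q$ decomposition with the canonical one (all of $I$ and $S$ reachable in $\Gamma$), information your argument does not produce but the lemma does not need.
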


In particular,
\eqref{oddeq} will imply:

\begin{equation}\label{333}
S \subseteq OR(K,F).
\end{equation}

\begin{proof}
We shall only prove \eqref{oddeq} - the proof of \eqref{eveneq} is the same. Claim \eqref{oddeq} is easily seen to be equivalent  to:

\begin{equation}\label{33}
V \setminus OR(K,F)=Q \cup \{r_i \mid i \in I\}.
\end{equation}

Form a bipartite graph $\Gamma$ whose one side is $I$ and the other side is $S$, where a pair $is~ (i \in I, s \in S)$ belongs  to $E(\Gamma)$  if some vertex in $H_i$ is connected to $s$. Let $Z$ be the matching $\{is \mid i \in I, s \in S, F(s) =r_i\}$. Let $I'$ be  the set of all $i \in I$ that are reachable by a $Z$-alternating path starting in  $V(\Gamma) \setminus \bigcup Z$, and $S'$ be the set of vertices of $S$ that are reachable by a $Z$-alternating path starting in $V(\Gamma) \setminus \bigcup Z$.  Let $I''=I \setminus I'$ and $S''=S \setminus S'$. Observe that $I'' \subseteq J$, and so $Z(i)$ exists for every $i \in I''$.

First we show that $S' \cup \bigcup_{i \in I'}V(H_i) \subseteq OR(K,F)$. Let $u \in S' \cup \bigcup_{i \in I'}V(H_i)$.
Let $\tilde{u}=u$ if $u \in S'$, and let $\tilde{u}=i$ if $u \in V(H_i)$. Then there is a $Z$-alternating path
leading to $\tilde{u}$ starting outside of $\bigcup Z$. Let $P$ be such a path. Namely, the vertices of $P$ are
$p_1 \ldots p_k$ in order, $p_1 \not \in \bigcup Z$, and $p_k=\tilde{u}$. Since $S \subseteq \bigcup Z$, it follows
that $p_1 \in I$. Therefore, if $u \in S$ then $P$ is odd, and if $u \not \in S$, then $P$ is even. For every even $m \in \{1, \ldots, k\}$ let $P_m$ be the one vertex path consisting of $p_m$ (note that $p_m \in S$). Now let
$m \in \{1, \ldots, k\}$ be odd. Then $p_m=i$ for some $i \in I$. If $i=1$, let $P_1$ be the one vertex path consisting of a neighbor of $p_2$ in $H_i$. For $1<m<k$, let  $P_m$ be an even  $F$-alternating path in $H_i$ from $r_i$ to a neighbor of $p_{i+1}$ in $H_i$ (such a path exists by Lemma~\ref{hypo}). Finally, if $m=k$, let $P_m$ be an odd $F$-alternating path of $H_i$ from $r_i$ to $u$ (again, the existence of such a path is guaranteed by Lemma~\ref{hypo}).
Now the concatination $P_1 \ldots P_k$ is an odd $K,F$-alternating path in $G$, starting from a vertex outside of
$\bigcup F$ and ending in $u$. This proves that $u \in OR(K,F)$, as required.

Suppose that \eqref{33} is false, and that there exists $v \not \in Q \cup \{r_i \mid i \in I\}$ such that $v \in V \setminus OR(K,F)$. By the claim of the previous paragraph, $v \not \in S' \cup \bigcup_{i \in I'}V(H_i)$. It follows that $v \in S'' \cup \bigcup_{i \in I''}V(H_i)$, and in particular $I'' \cup S'' \neq \emptyset$.

We construct a new decomposition of $G$, that satisfies \eqref{33}.
Let $\tilde{S}=S'$, $\tilde{R}=\bigcup_{i \in I'} V(H_i)$ and $\tilde{Q}=Q \cup S'' \cup \bigcup_{i \in I''} V(H_i)$.
To show that this is a decomposition as in Theorem \ref{gallaiedmonds}, we only need to show that there are no edges between
$\tilde{Q}$ and $\tilde{R}$. Suppose such an edge $qr$ exists, where $q \in \tilde{Q}$ and $r \in \tilde{R}$.
Since $R$ is the union of component of $V(G) \setminus S$, it follows that $q \in S''$. Let $i \in I'$ be such that
$r \in V(H_i)$. Since $i \in I'$, there is a $Z$-alternating path $P$ in $\Gamma$ from a vertex outsize  $\bigcup Z$ to
$i$. Since $q \in S''$, we deduce that $q \not \in V(P)$, and in particular $q \neq Z(i)$. Since $qr \in E(G)$, it follows that $qi \in E(\Gamma)$. But now adding the edge $iq$ to $P$ results in a $Z$-alternating path in $\Gamma$ from a vertex outside  $\bigcup Z$ to $q$, contrary to the fact that $q \not \in S'$. This proves that $(\tilde{S}, \tilde{R}, \tilde{Q})$ is a decomposition as in Theorem~\ref{gallaiedmonds}, and this decomposition satisfies \eqref{33}.

To complete the proof, note that
if  $Q$ is maximal then necessarily $I'=I$ and $S'=S$, namely the decomposition at hand has the desired properties.
\end{proof}
\begin{remark}
The lemma is essentially known (see, e.g., \cite{goemans}), but is usually mentioned only implicitly.
\end{remark}

Having proved the lemma, from now on we assume that the decomposition $(Q,R,S)$ satisfies \eqref{33}.

The last step towards the proof of Lemma \ref{mainlemma} is:

\begin{lemma}\label{bridge}
An edge $e$  not belonging to $E(G)$ is
enriching if and only if it connects some $V(H_i)$ with either $V(H_k)$ for some $k \neq i$, or with $Q$.
\end{lemma}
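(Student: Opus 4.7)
The plan is to prove the biconditional in two directions. By Lemma \ref{betterge}, $V(G) \setminus OR(K, F) = Q \cup \{r_i : i \in I\}$, so $e$ is enriching iff some vertex of $Q \cup \{r_i : i \in I\}$ becomes oddly reachable after the addition of $e$.

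For the forward direction, I would construct explicit odd $(K \cup \{e\})$-$F$ alternating simple paths from a free vertex to an appropriate target in $Q \cup \{r_i : i \in I\}$. If $e = uv$ with $u \in V(H_i)$ and $v \in Q$, then any even $K$-$F$ alternating path $P_1$ from a free vertex to $u$ (available since $u \in ER(K, F) = R$ by Lemma \ref{betterge}) avoids $Q$ by the structural analysis of free-starting alternating paths preceding Assertion \ref{er}, so $P_1 \cdot e$ is simple and witnesses $v \in OR(K \cup \{e\}, F)$. If $e = uv$ with $u \in V(H_i)$, $v \in V(H_k)$ and $i \neq k$, the target is $r_i$ or $r_k$. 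In the easy sub-cases $i \in D$ or $k \in D$, the path is obtained by concatenating two even alternating paths lying entirely inside $V(H_i)$ and $V(H_k)$ (produced by Lemma \ref{hypo} applied to the hypomatchable graphs $H_i, H_k$) with the edge $e$, which is automatically simple. When both $i, k \in J$, pick a free vertex $r_d$ and an even $K$-$F$ alternating path $P_1$ from $r_d$ to $u$. If $V(P_1) \cap V(H_k) = \emptyset$, proceed as in the easy case; otherwise, by the structural analysis $P_1$ enters $V(H_k)$ via the $F$-edge $s_k r_k$, and truncating $P_1$ at its first visit to $r_k$ yields a prefix $P_1^\dagger$ that avoids $V(H_i)$, because $V(H_i)$ is necessarily the final $R$-component visited by $P_1$, strictly after $V(H_k)$. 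Then $P_1^\dagger$, an in-$H_k$ even path from $r_k$ to $v$, the edge $e$, and an in-$H_i$ even path from $u$ to $r_i$ concatenate into a simple odd alternating path ending at $r_i$.

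For the converse direction I would argue the contrapositive: if $e$ is not of the listed form, then $e$ falls into one of (a) $e \subseteq V(H_i)$, (b) $e \subseteq Q$, or (c) $e$ has at least one endpoint in $S$. In each case $(Q, R, S)$ remains a Gallai-Edmonds decomposition of $G + e$ with $F$ still maximum. The structural conditions of Theorem \ref{gallaiedmonds} persist straightforwardly --- in case (a) using that $H_i + e$ is still hypomatchable, since a super-graph of a hypomatchable graph on the same vertex set is hypomatchable. The maximality of $F$ in $G + e$ is the central verification: any augmenting path through $e$, split as $P_1 \cdot e \cdot P_2$, would force in cases (b) and (c) an endpoint of $e$ to lie in $Q \cap ER(K, F)$ or $S \cap ER(K, F)$, contradicting $ER(K, F) = R$; and in case (a) both $P_1$ and the reverse of $P_2$ would be free-starting alternating paths ending in $V(H_i)$, forcing them to share either the starting free vertex $r_i$ (when $i \in D$, violating distinctness of the two free endpoints of an augmenting path) or the vertex $r_i$ (when $i \in J$, violating simplicity). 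Once $F$ is confirmed maximum in $G + e$, Assertion \ref{orcontained} applied to $G + e$ gives $OR(K \cup \{e\}, F) \subseteq S \cup \bigcup_{i \in I}(V(H_i) \setminus \{r_i\}) = OR(K, F)$, so $e$ is not enriching.

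The main obstacle is ensuring simplicity of the alternating path constructed in the hard ``if'' sub-case when both $i, k \in J$, where the natural $P_1$ from a free vertex to $u$ may traverse $V(H_k)$ before terminating in $V(H_i)$; the truncation trick at $r_k$, leveraging that $V(H_i)$ is the final $R$-component visited by any such $P_1$, overcomes this cleanly.
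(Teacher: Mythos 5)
Your proof is correct. For the sufficiency direction (the only one the paper actually proves), you follow essentially the same route as the paper: an explicit construction of an odd $(K\cup\{e\})$--$F$ alternating path from a free vertex to a target in $Q\cup\{r_i \mid i\in I\}$, using the structural description of free-starting alternating paths, Lemma \ref{hypo} inside the components $H_i$, and the equalities of Lemma \ref{betterge}. The only real difference is how you handle the simplicity problem when both components are in $J$: the paper invokes \eqref{333} to obtain a free-starting path ending at $s=F(r_i)$ and splices through $H_i$ or $H_k$ depending on whether that path meets the relevant component, whereas you take a free-starting even path directly to $u$ and truncate it at $r_k$ at its first entry into $H_k$, using the observation that $H_i$ must be the last component visited. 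Both resolutions are valid; yours is arguably slightly cleaner. For the necessity direction, which the paper explicitly omits as ``easier,'' you supply a complete argument by checking that $(Q,R,S)$ remains a Gallai--Edmonds decomposition of $G+e$ with $F$ still maximum (the maximality check via splitting a putative augmenting path at $e$ is the key point, and your case analysis there is sound), and then reusing Assertion \ref{orcontained} for $G+e$ together with \eqref{oddeq} for $G$. This is a tidy way to get the converse and is a genuine addition relative to what the paper writes down.
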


Equivalently, $e$ is
non-enriching if and only if it satisfies one of the following conditions:

\begin{enumerate}
\item
It is incident with $S$,

\item It is contained in $Q$.

\item
It is contained in $V(H_i)$ for some  $i\in I$.

\end{enumerate}

\begin{proof}
We shall prove only the sufficiency of the condition in the lemma, which is what we need for
the proof of Lemma \ref{mainlemma}. The proof of the necessity is easier and is omitted.

Let $e=uv$ be an edge as in the assertion. One of its vertices, say $u$, belongs to $V(H_i)$ for some $i \in I$, and either

(i) $v \in Q$, or

(ii) $v \in H_k$ for some $k \in I, ~k \neq i$.

Let $P$ be an even $K-F$ alternating path from $r_i$ to $u$, contained in $V(H_i)$. If Case (ii) occurs, let $T$ be an even $K-F$ alternating path from $r_k$ to $v$ contained in $V(H_k)$.

If $i \in D$  then, in Case (i), the path $Pe$ shows that $v \in OR(K \cup \{e\}, F)$, proving that $e$ is enriching. In Case (ii), the path $Pe\overleftarrow{T}$ shows that $r_k \in OR(K \cup \{e\}, F)$, again proving that $e$ is enriching.

Thus we may assume that $i \in J$. Let $s=F(r_i)$.

Consider first Case (i). By \eqref{333} there exists an odd free-starting $K-F$ alternating  path $B$ ending at  $s$. If $V(B) \cap V(H_i)\neq \emptyset$ then, since $V(H_i)\subseteq \bigcup F$, the first edge in $B$ meeting $H_i$ is not in $F$. Let $x$ be the first vertex on $B$ belonging to $V(H_i)$, and let
let $W$ be an even $K-F$ alternating path from $r_i$ to $x$ contained in $V(H_i)$. Then the path $Bx\overleftarrow{W}$ shows that $r_i \in OR(K\cup \{e\},F)$, proving that $e$ is enriching.

So, we may assume that  $V(B) \cap V(H_i)=\emptyset$. Let $C$ be a $K-F$-alternating path witnessing the fact that $u \in ER(r_i, K,F)$ (see Lemma \ref{hypovertex}). The path $Bsr_iCuv$ then shows that $v \in \orkef$, again proving that $e$ is enriching.

Next consider Case (ii). Let $B$ be an even  free-starting alternating path terminating at $r_i$, meaning that its last edge is $sr_i$. Let  $P$ be an alternating  path witnessing the fact that $u \in ER(r_i,K,F)$, and let $W$ be a path witnessing the fact that
$v \in ER(r_k,K,F)$.  If $B$ does not meet $V(H_k)$, then  the path $Br_iP*e*\overleftarrow{W}$ shows that $r_k \in \orkef$, meaning that $e$ is enriching.
If $B$ meets $V(H_k)$, then    it must contain the edge $F(r_k)r_k$. Then the path ${W}*\overleftarrow{e}*\overleftarrow{P}$ shows that $r_i \in \orkef$, again showing that $e$ is enriching.
\end{proof}

{\em Proof of Lemma \ref{mainlemma}}  Let $A$ be a free-starting $F$-alternating path. Then $in(A)=r_d$ for some $d \in D$.
If the edge $e$ leaving $V(H_d)$ goes to $Q$ or to some $V(H_i)$, then $e$  is enriching, by Lemma \ref{bridge}. So, we may assume that the non-$V(H_i)$ vertex of $e$ is $s_j$ for some $j \in J$. The next edge on $A$ is then $s_jr_j$. By the same argument, the edge leaving $V(H_j)$ must go to $S$. Continuing this way, and noting that $ter(A) \not  \in S$, shows that one of the edges of $A$ must connect two $V(H_i)$s, or some $V(H_i)$ with $Q$. By Lemma \ref{bridge} this edge is enriching.

\section{Multicolored alternating paths and proof of Theorem \ref{main}}

Given a family (namely a multiset) $\cp$ of $F$-alternating paths, an $F$-alternating path $P$ is said to be
 {\em $\cp$-multicolored} if $E(P)\setminus F$ is a   rainbow set of the family $E(Q),~~ Q \in \cp$.

\begin{corollary}\label{maincor}
If $\cp$ is a family of augmenting $F$-alternating paths and $|\cp|>2|F|$ then there exists an augmenting $\cp$-multicolored $F$-alternating path.
\end{corollary}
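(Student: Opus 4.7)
The plan is a greedy enrichment procedure that consumes one path of $\cp$ per step. Initialize $K_0:=\emptyset$, and at step $i\ge 0$ consider the graph $H_i:=(V(G),F\cup K_i)$. If $F$ is not a maximum matching of $H_i$, apply Lemma~\ref{aug} to $F$ and a larger matching of $H_i$ to obtain an $F$-alternating augmenting path $P^*$ whose edges lie in $F\cup K_i$, and terminate. Otherwise $F$ is a maximum matching of $H_i$, so choose any path $P_{i+1}\in\cp$ not yet used and apply Lemma~\ref{mainlemma} with $(H_i,K_i)$ playing the role of $(G,K)$, to the augmenting $F$-alternating path $P_{i+1}$. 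The lemma yields an edge $e_{i+1}\in E(P_{i+1})\setminus (F\cup K_i)$ with $OR(K_i\cup\{e_{i+1}\},F)\supsetneq OR(K_i,F)$. Set $K_{i+1}:=K_i\cup\{e_{i+1}\}$ and label the new edge with $P_{i+1}$.

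Termination is forced within $2|F|+1$ steps. Each $e_j$ is enriching with respect to $K_{j-1}$, so $|OR(K_i,F)|$ grows strictly with $i$, whence $|OR(K_s,F)|\ge s$ after $s$ steps. On the other hand, so long as $F$ is a maximum matching of $H_i$ there is no augmenting $F$-alternating path inside $H_i$, and hence $OR(K_i,F)\subseteq\bigcup F$, giving $|OR(K_i,F)|\le 2|F|$. Therefore the process halts at some step $s\le 2|F|+1$, and the assumption $|\cp|>2|F|$, i.e., $|\cp|\ge 2|F|+1\ge s$, guarantees that a fresh path of $\cp$ is available whenever one is needed.

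At termination we have an augmenting $F$-alternating path $P^*$ in $H_s$, and every non-$F$ edge of $P^*$ belongs to $K_s=\{e_1,\ldots,e_s\}$. Since $e_j\in E(P_j)$ and the labels $P_1,\ldots,P_s$ are pairwise distinct by construction, sending each non-$F$ edge of $P^*$ to its labeling path realises $E(P^*)\setminus F$ as a rainbow set for the family $\bigl(E(Q):Q\in\cp\bigr)$. Hence $P^*$ is an augmenting $\cp$-multicolored $F$-alternating path, as required. The only point needing care is that Lemma~\ref{mainlemma} is reapplied at each step with the shifting data $(H_i,K_i)$ in place of $(G,K)$; this is legitimate because the lemma makes no reference to a fixed ambient graph beyond requiring that $F$ be a maximum matching of it, and that hypothesis is checked precisely at the moment we perform the step.
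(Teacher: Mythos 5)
Your proof is correct and takes essentially the same approach as the paper: repeatedly extract an enriching edge from a fresh path of $\cp$ via Lemma~\ref{mainlemma}, so that $OR(K_i,F)$ grows strictly and must escape $\bigcup F$ within $2|F|+1$ steps, at which point the resulting augmenting $K_s$--$F$-alternating path is $\cp$-multicolored by construction. Your explicit bookkeeping with the graphs $H_i=(V(G),F\cup K_i)$ merely spells out the re-application of the lemma to the changing edge set $K_i$ that the paper's proof leaves implicit.
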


\begin{proof}
By Lemma \ref{mainreach} we can construct   inductively sets of edges $K_i$, where $K_0=\emptyset$ and $K_i=K_{i-1}\cup\{e_i\}$, $e_i \in E(P_i)$, and $OR(K_{i+1},F)\supsetneqq OR(K_i, F)$. Since there are only $2|F|$ vertices in $\bigcup F$, at some point $OR(K_{i},F)$ will contain a vertex not in $\bigcup F$, meaning that there exists an augmenting $K_i-F$-alternating path $P$, which by the inductive construction of the sets $K_i$ is $\cp$-multicolored.
\end{proof}

Finally, we derive Theorem \ref{main} from Corollary \ref{maincor}. We have to show that
given $3n-2$ matchings $M_i, ~i \le 3n-2$ of size $n$ there exists a   rainbow matching of size $n$. Let $F$ be a rainbow matching of maximal size, and let $|F|=k$. We wish to show that $k=n$.  Suppose to the contrary that $k<n$. Then there are at least $2k+1$ matchings $M_i$ not represented in $F$.
Each of these generates an augmenting $F$-alternating path $P_i$, and by the corollary, there is an augmenting multicolored $F$-alternating path $P$ using edges from the paths $P_i$. None of the colors appearing in $P$ are used
in $F$, and hence $F \triangle E(P)$ is a   rainbow matching of size $k+1$, contradicting the maximality property of $k$.

\begin{remark}
In \cite{akz} it was shown that in the bipartite  case Corollary \ref{maincor} only demands $|\cp|>|F|$. In  the case of general graphs  Corollary \ref{maincor} is sharp - $2|F|$ augmenting $F$-alternating paths
do not suffice, as the following example shows.
Let $F$ be a matching $\{u_iv_i \mid i \le k-1\} \cup \{xy\}$,  let
$P_1, \ldots ,P_k$ all be  the same path $F \cup \{xu_1\}\cup
\{v_{k-1}y\}\cup \{v_iu_{i+1} \mid i \le k-2\}$ and let $P_{k+1}, \ldots
,P_{2k}$ all be equal to the same path   $F \cup \{xv_1\}\cup \{u_{k-1}y\}\cup
\{u_iv_{i+1} \mid i \le k-2\}$.
\end{remark}

{\bf Acknowledgement} We are grateful to Danny Kotlar and Tung Nguyen for useful remarks.

\end{document}